\documentclass[final]{amsart} 

\usepackage[boxruled,boxed,lined]{algorithm2e}
\usepackage{amssymb}
\usepackage{amsfonts}
\usepackage{amsmath}
\usepackage{amscd}
\usepackage{amsthm}
\usepackage{graphicx}
\usepackage[dvips]{color}

\newtheorem{theorem}{Theorem}
\theoremstyle{definition}

\theoremstyle{remark}
\newtheorem{remark}{Remark}
\newtheorem{example}{Example}

\newcommand{\draft}{false}

\newcommand{\R}{\mathbb{R}}
\newcommand{\IR}{\mathbb{IR}}

\newcommand{\va}{{\bf a}}
\newcommand{\vb}{{\bf b}}
\newcommand{\vf}{{\bf f}}
\newcommand{\vg}{{\bf g}}

\newcommand{\vx}{{\bf x}}
\newcommand{\vy}{{\bf y}}
\newcommand{\vz}{{\bf z}}

\newcommand{\vnull}{{\bf 0}}

\newcommand{\imid}{\mathrm{mid}}
\newcommand{\iwid}{\mathrm{wid}}


\title{Sensitivity Analysis Using a Fixed Point Interval Iteration}

\author{Alexandre Goldsztejn$^*$}

\thanks{$^*$CNRS -- University of Nantes, France ({\tt Alexandre.Goldsztejn@univ-nantes.fr}).}

\date{}

\begin{document}

\begin{abstract}
	Proving the existence of a solution to a system of real equations is a central issue in numerical analysis. In many situations, the system of equations depend on parameters which are not exactly known. It is then natural to aim proving the existence of a solution for all values of these parameters in some given domains. This is the aim of the parametrization of existence tests. A new parametric existence test based on the Hansen-Sengupta operator is presented and compared to a similar one based on the Krawczyk operator. It is used as a basis of a fixed point iteration dedicated to rigorous sensibility analysis of parametric systems of equations.
\end{abstract}

\maketitle

\section*{Notations}

Vectors are denoted by boldface symbols, and interval, interval vectors and matrices by bracketed symbols. Let $\vf:\mathbb E\longrightarrow \mathbb F$ be a function, and $\mathbb X\subseteq\mathrm{dom}(\vf)$. Then $\vf(\mathbb X):=\{\vf(\vx)\in\mathbb F|\vx\in\mathbb X\}$ is the range of $\vf$ on $\mathbb X$.


\section{The Hansen-Sengupta Existence Test}

The presentation given here follows the one given by Neumaier in \cite{Neum90}. The interval Gauss-Seidel is defined as follows: First in dimension one, 
\begin{equation}
	[\gamma]\bigl([a],[b],[x]\bigr):=\Box \{ x\in[x] \ | \ \exists a\in[a],\exists b\in[b],ax=b \}.
\end{equation}
In the case where $0\notin[a]$, one obtains the expression $[\gamma]\bigl([a],[b],[x]\bigr)=([b]/[a])\cap[x]$ (cf. \cite{Neum90} for the expression in the case $0\in[a]$). Then, the multidimensional Gauss-Seidel is then defined as follows: $[\Gamma]\bigl([A],[\vb],[\vx],[\vz]\bigr):=[\vx']$ where
\begin{equation}
	[x'_i]:=[\gamma]\Bigl( \ [a_{ii}] \ , \ [b_i]-\sum_{j< i}[A_{ij}][x'_j]-\sum_{j> i}[A_{ij}][x_j] \ , \ [z_i] \ \Bigr).
\end{equation}

\begin{remark}
	In the traditional definition of the interval Gauss-Seidel operator, the interval vector $[\vz]$ is equal to $[\vx]$ (and hence does not appear explicitly in its definition). Using instead $[\vz]=\R^n$ disactivates the intersection with the previous domain and can be useful for some applications (cf. Section \ref{s:application}).
\end{remark}

Then, the Hansen-Sengupta operator \cite{Hansen1981} can be defined as follows
\begin{equation}\label{eq:Hansen-Sengupta}
	\tilde \vx+[\Gamma]\Bigl( [X] , -[\vy] , [\vx]-\tilde\vx, [\vz]-\tilde\vx \Bigr).
\end{equation}
where $[X]\in\IR^{n\times n}$ and $[\vx],[\vy],[\vz]\in\IR^n$. The following theorem shows how the Hansen-Sengupta operator can be used to improve the enclosure and prove the existence of solutions (cf. \cite{Neum90}).
\begin{theorem}\label{thm:HS}
	Let $[\vx],[\vy],[\vz]\in\IR^n$, $\tilde\vx\in[\vx]$ and $[X]\in\IR^{n\times n}$ such that: $[\vx]\subseteq[\vz]$, $\vf(\tilde\vx)\in[\vy]$ and $[X]\supseteq\{\frac{d\vf}{d\vx}(\vx)\in\R^{n\times n}|\vx\in[\vx]\}$. If $[\vx']$ denotes \eqref{eq:Hansen-Sengupta} then:
	\begin{enumerate}
		\item $\vx\in[\vx]$ and $\vf(\vx)=\vnull$ implies $\vx\in[\vx']$.
		\item If $\emptyset\neq[\vx']\subseteq \mathrm{int}[\vx]$ then $\vf$ has an unique zero in $[\vx']$.
	\end{enumerate}
\end{theorem}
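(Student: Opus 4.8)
\emph{Part 1} is the mean value theorem combined with the recursion defining $[\Gamma]$. Given $\vx\in[\vx]$ with $\vf(\vx)=\vnull$, apply the mean value theorem componentwise: for each $i$ there is $\xi_i$ on the segment $[\tilde\vx,\vx]\subseteq[\vx]$ with $f_i(\vx)-f_i(\tilde\vx)=\frac{df_i}{d\vx}(\xi_i)(\vx-\tilde\vx)$, so stacking these rows produces $A\in[X]$ with $A(\vx-\tilde\vx)=\vf(\vx)-\vf(\tilde\vx)=-\vf(\tilde\vx)$. Set $\vv:=\vx-\tilde\vx$; then $\vv\in[\vx]-\tilde\vx$, $\vv\in[\vz]-\tilde\vx$ (since $[\vx]\subseteq[\vz]$), and $A\vv\in-[\vy]$. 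I would now prove $v_i\in[x'_i]$ by induction on $i$: the $i$-th equation of $A\vv=-\vf(\tilde\vx)$ reads $a_{ii}v_i=-f_i(\tilde\vx)-\sum_{j<i}a_{ij}v_j-\sum_{j>i}a_{ij}v_j$, and the induction hypothesis (for $j<i$) together with $v_j\in[x_j]-\tilde x_j$ (for $j>i$) places the right-hand side in the ``numerator'' interval of the $i$-th component of $[\Gamma]$; since also $v_i\in[z_i]-\tilde x_i$, the defining property of $[\gamma]$ yields $v_i\in[x'_i]$. Hence $\vx=\tilde\vx+\vv\in[\vx']$.

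For \emph{Part 2}, two preliminary observations. First, $\emptyset\neq[\vx']\subseteq\mathrm{int}[\vx]$ forces $0\notin[X_{ii}]$ for every $i$: reading the inclusion componentwise, each $[\gamma]$ in the definition must be nonempty and contained in the bounded open box $\mathrm{int}([x_i]-\tilde x_i)$, which a short case check on the extended interval division shows to be impossible as soon as $0$ lies in the first argument of $[\gamma]$. Second, it then follows that the intersection with $[z_i]-\tilde x_i$ inside each $[\gamma]$ is vacuous, so $[x'_i]=[d_i]/[X_{ii}]$ with $[d_i]:=-[y_i]-\sum_{j<i}[X_{ij}][x'_j]-\sum_{j>i}[X_{ij}]([x_j]-\tilde x_j)$.

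\emph{Existence} I would obtain from Brouwer's theorem. Assuming $\vf\in C^1$ (implicit in the existence of the enclosure $[X]$), put $A(\vu):=\int_0^1\frac{d\vf}{d\vx}(\tilde\vx+t(\vu-\tilde\vx))\,dt$; this is continuous in $\vu$, lies in $[X]$ for $\vu\in[\vx]$, and satisfies $\vf(\vu)-\vf(\tilde\vx)=A(\vu)(\vu-\tilde\vx)$. Define the pointwise Gauss--Seidel map $\vg:[\vx]\to\R^n$ by the recursion of $[\Gamma]$ with the real coefficients $A(\vu)_{ij}$ and the real right-hand side $-\vf(\tilde\vx)$ (well defined and continuous since $0\notin[X_{ii}]$). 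The induction of Part 1 now shows $\vg([\vx])\subseteq\tilde\vx+[\Gamma](\dots)=[\vx']\subseteq\mathrm{int}[\vx]$, so $\vg$ has a fixed point $\vx^*\in[\vx']$; unwinding $\vg(\vx^*)=\vx^*$ gives $A(\vx^*)(\vx^*-\tilde\vx)=-\vf(\tilde\vx)$, hence $\vf(\vx^*)=\vf(\tilde\vx)+A(\vx^*)(\vx^*-\tilde\vx)=\vnull$.

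\emph{Uniqueness}: if $\vx^{(1)},\vx^{(2)}\in[\vx']$ are both zeros of $\vf$, the mean value theorem gives $\tilde A\in[X]$ (rows taken along $[\vx^{(2)},\vx^{(1)}]\subseteq[\vx']$) with $\tilde A(\vx^{(1)}-\vx^{(2)})=\vnull$, so it suffices to know that every real matrix in $[X]$ is nonsingular. Squeezing this regularity of $[X]$ out of the bare strict inclusion is, I expect, the main obstacle. The plan is a scaling argument: a hypothetical nonzero kernel vector of some $A\in[X]$ is rescaled so as to meet the boundary of an origin-centred box on which the \emph{homogeneous} Gauss--Seidel iteration strictly contracts (such a box and contraction being extracted from $[\vx']\subseteq\mathrm{int}[\vx]$ and $0\notin[X_{ii}]$ via an H-matrix / Perron--Frobenius type estimate on widths and magnitudes), and the inclusion property of Part 1, applied to this homogeneous linear system, then contradicts ``meeting the boundary''. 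Alternatively, one may simply invoke Neumaier's convergence theory for the interval Gauss--Seidel operator, which already yields that $[X]$ is regular. Everything else reduces to bookkeeping with interval arithmetic and the mean value theorem; making this regularity step self-contained is the real work.
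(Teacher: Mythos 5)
The paper itself offers no proof of this theorem --- it is quoted from Neumaier \cite{Neum90} --- so your attempt has to be judged against the standard textbook argument. Measured that way, most of it is right. Part 1 (componentwise mean value theorem producing $A\in[X]$ with $A(\vx-\tilde\vx)=-\vf(\tilde\vx)$, followed by induction along the Gauss--Seidel recursion, using $[\vx]\subseteq[\vz]$ to satisfy the third argument of $[\gamma]$) is the correct and complete argument. Your two preliminary observations for Part 2 are also correct: if some $[X_{ii}]$ contained $0$, or if the quotient interval protruded beyond $[\vz]-\tilde\vx$, the nonempty result would reach the boundary of $[\vx]-\tilde\vx$ and could not sit in the interior. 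The existence argument via Brouwer applied to the pointwise Gauss--Seidel map built from $A(\vu)=\int_0^1\frac{d\vf}{d\vx}(\tilde\vx+t(\vu-\tilde\vx))\,dt$ is sound: the Part 1 induction does show $\vg([\vx])\subseteq[\vx']\subseteq\mathrm{int}[\vx]$, and unwinding the fixed point gives a zero of $\vf$ in $[\vx']$.

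The genuine gap is the one you flag yourself: uniqueness needs every point matrix in $[X]$ to be nonsingular, and you do not establish this. It is not bookkeeping --- it is the hardest single step of the theorem. The missing lemma is that $\emptyset\neq[\Gamma]([A],[\vb],[\vx],[\vz])\subseteq\mathrm{int}[\vx]$ forces $[A]$ to be an H-matrix, i.e.\ that there is a positive vector $\vu$ (essentially the vector of radii of $[\vx]$) with $\langle[A]\rangle\vu>\vnull$, where $\langle[A]\rangle$ is Ostrowski's comparison matrix; regularity of every $A\in[A]$ then follows from the standard theory of H-matrices. Your ``rescale a kernel vector until it touches the boundary of a box on which the homogeneous iteration contracts'' sketch is a disguised form of exactly this inequality, but as written it is a plan rather than a proof: you still have to produce the box and prove the strict contraction, and the derivation is delicate because the $j<i$ terms of the recursion involve the already-contracted components $[x'_j]$ rather than $[x_j]$, so the naive radius estimate goes the wrong way. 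Either carry out that computation or import the H-matrix lemma explicitly from \cite{Neum90}; until one of the two is done, the uniqueness claim in Part 2 is unproven.
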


\begin{remark}
	The interval evaluation of the derivatives can be replaced by Lipschitz interval matrices to release the differentiability hypothesis, and to slope matrices to improve the enclosure (though uniqueness of the solution is lost when slopes are used, cf. \cite{Neum90} for details).
\end{remark}

A preconditioning is usually coupled to this kind of operator: The preconditioned system $C\cdot\vf(\vx)=\vnull$, where $C\in\R^{n\times n}$ is nonsingular, is equivalently solved instead of $\vf(\vx)=\vnull$. The preconditioning matrix $C$ is chosen so that $C\cdot\vf$ is close to the identity in the domain considered, hence improving the efficiency of the operator.


\section{Parametrization of the Hansen-Sengupta Existence Test}\label{s:pp-HS}

Functions with parameters are considered in this section. Let $\vf:\R^p\times\R^n\longrightarrow \R^n$ be a function of $n$ variables and $p$ parameters. Parameters will be denoted by the vector $\va$ and variables by the vector $\vx$. The parametric Hansen-Sengupta operator is expressed applying its non-parametric version to different inputs. A more general parametric Hansen-Sengupta (which was dedicated to quantified parameters thanks to the usage of the Kaucher arithmetic) was proposed and used in \cite{Goldsztejn-SAC2006}.

\begin{theorem}\label{thm:natural-parametrization}
	Let $[\vx],[\vy],[\vz]\in\IR^n$, $[\va]\in\IR^p$, $\tilde\vx\in[\vx]$ and $[X]\in\IR^{n\times n}$ such that: $[\vx]\subseteq[\vz]$, $\vf([\va],\tilde\vx)\subseteq[\vy]$ and $[X]\supseteq\{\frac{d\vf}{d\vx}(\va,\vx)\in\R^{n\times n}|\va\in[\va],\vx\in[\vx]\}$. If $[\vx']$ denotes \eqref{eq:Hansen-Sengupta} then:
	\begin{enumerate}
		\item $\va\in[\va]$ and $\vx\in[\vx]$ and $\vf(\va,\vx)=\vnull$ implies $\vx\in[\vx']$.
		\item If $\emptyset\neq[\vx']\subseteq \mathrm{int}[\vx]$ then for every $\va\in[\va]$, $\vf(\va,\cdot)$ has an unique zero in $[\vx']$.
	\end{enumerate}
\end{theorem}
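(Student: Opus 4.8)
The plan is to reduce the parametric statement to the non-parametric Theorem~\ref{thm:HS} by freezing the parameter. First I would note that the interval quantity \eqref{eq:Hansen-Sengupta} defining $[\vx']$ depends only on $[X]$, $[\vy]$, $[\vx]$, $[\vz]$ and $\tilde\vx$, and in no way on $\va$; hence exactly the same box $[\vx']$ is produced whichever value $\va\in[\va]$ one has in mind. This is what will allow a single $[\vx']$ to serve uniformly across all admissible parameter values, which is the crux of the parametric conclusions.

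Next I would fix an arbitrary $\va\in[\va]$ and apply Theorem~\ref{thm:HS} to the sliced function $\vg:=\vf(\va,\cdot):\R^n\longrightarrow\R^n$, keeping the same data $[\vx],[\vy],[\vz],\tilde\vx,[X]$. One checks that the hypotheses of Theorem~\ref{thm:HS} are met: $[\vx]\subseteq[\vz]$ is unchanged; since $\va\in[\va]$ we have $\vg(\tilde\vx)=\vf(\va,\tilde\vx)\in\vf([\va],\tilde\vx)\subseteq[\vy]$, so $\vg(\tilde\vx)\in[\vy]$; and since $\{\frac{d\vg}{d\vx}(\vx)\,|\,\vx\in[\vx]\}\subseteq\{\frac{d\vf}{d\vx}(\va,\vx)\,|\,\va\in[\va],\vx\in[\vx]\}\subseteq[X]$, the Jacobian enclosure hypothesis also holds. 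Theorem~\ref{thm:HS} therefore applies to $\vg$, and the box it returns is precisely $[\vx']$.

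The two conclusions then follow at once. For item~(1), if $\va\in[\va]$, $\vx\in[\vx]$ and $\vf(\va,\vx)=\vnull$, then $\vx$ is a zero of $\vg$ lying in $[\vx]$, so part~(1) of Theorem~\ref{thm:HS} gives $\vx\in[\vx']$. For item~(2), assuming $\emptyset\neq[\vx']\subseteq\mathrm{int}[\vx]$, part~(2) of Theorem~\ref{thm:HS} applied to $\vg$ shows that $\vf(\va,\cdot)$ has a unique zero in $[\vx']$; as $\va\in[\va]$ was arbitrary, this holds for every $\va\in[\va]$.

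I do not expect a genuine obstacle: the argument is just a uniform-in-parameter specialization of the parameter-free result. The only point requiring a little care is the one flagged above, namely that the operator output is parameter-independent, so that the range inclusion hypothesis $\vf([\va],\tilde\vx)\subseteq[\vy]$ degrades correctly to the pointwise requirement $\vf(\va,\tilde\vx)\in[\vy]$ of Theorem~\ref{thm:HS} for each frozen $\va$, while the conclusions still refer to a single common $[\vx']$. If one wished to drop differentiability, the same freezing argument would go through verbatim using the Lipschitz-matrix variant mentioned in the remark following Theorem~\ref{thm:HS}.
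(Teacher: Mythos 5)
Your proposal is correct and follows essentially the same route as the paper: fix an arbitrary $\va\in[\va]$, apply Theorem~\ref{thm:HS} to the sliced function $\vg=\vf(\va,\cdot)$ after checking its hypotheses, and conclude by arbitrariness of $\va$. Your explicit remark that the operator output $[\vx']$ is independent of the frozen parameter is a worthwhile clarification that the paper leaves implicit.
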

\begin{proof}
	Fix an arbitrary $\hat\va\in[\va]$ and define $\vg:\R^n\longrightarrow\R^n$ by $\vg(\vx)=\vf(\hat\va,\vx)$. We are going to apply Theorem \ref{thm:HS} to $\vg$. First, $\vg(\tilde\vx)=\vf(\hat\va,\vx) \in [\vy]$. Second, as $\frac{d\vg}{d\vx}(\vx)=\frac{d\vf}{d\vx}(\hat\va,\vx)$,
	\begin{equation}
		\left\{\frac{d\vg}{d\vx}(\vx)\in\R^{n\times n} \ | \ \vx\in[\vx]\right\} \ = \ \left\{\frac{d\vf}{d\vx}(\hat\va,\vx)\in\R^{n\times n} \ | \ \vx\in[\vx]\right\} \ \subseteq \ [X].
	\end{equation}
	Therefore, Theorem \ref{thm:HS} can be applied to $\vg$ and the domain $[\vx]$, and shows that if $[\vx']$ denotes \eqref{eq:Hansen-Sengupta} then
	\begin{enumerate}
		\item $\vg(\vx)=\vnull$ (that is $\vf(\hat\va,\vx)=\vnull$) implies $\vx\in[\vx']$.
		\item $\emptyset\neq[\vx']\subseteq \mathrm{int}[\vx]$ implies the existence of an unique zero of $\vg$ (that is of $\vf(\hat\va,\cdot)$) in $[\vx']$.
	\end{enumerate}
	This holds for every $\hat\va\in[\va]$ and hence concludes the proof.
\end{proof}

An interval extension of $\vf$ can be used to compute $[\vy]$ satisfying $\vf([\va],\tilde\vx)\subseteq[\vy]$. Using the mean-value extension to compute $[\vy]$ and the usual inverse midpoint preconditioning gives rise the following parametric Hansen-Sengupta operator, denoted by $[H]_{\vf,[\va]}([\vx],[\vz])$ in the sequel:
\begin{equation}\label{eq:pp-HS}
	\tilde\vx+[\Gamma]\Bigl( \ C\cdot[X] \ , \ -C\cdot[\vf](\tilde\va,\tilde\vx) - \bigl(C\cdot[A]\bigr)\cdot([\va]-\tilde\va) \ , \ [\vx]-\tilde\vx \ \Bigr),
\end{equation}
with $[X]=[\frac{d\vf}{d\vx}]([\va],[\vx])$, $[A]=[\frac{d\vf}{d\va}]([\va],\tilde\vx)$, $C=(\imid[X])^{-1}$, $\tilde\vx=\imid[\vx]$ and $\tilde\va=\imid[\va]$. The operator \eqref{eq:pp-HS} will be denoted by $[H]_{\vf,[\va]}([\vx],[\vz])$ in the sequel. Experiments presented in the sequel will be carried out using the natural interval extensions of $\vf$, $\frac{d\vf}{d\va}$ and $\frac{d\vf}{d\vx}$.

\begin{remark}\label{rem:improvement}
	In \eqref{eq:pp-HS}, the expression $C\cdot[\vf](\tilde\va,\tilde\vx) + \bigl(C\cdot[A]\bigr)\cdot([\va]-\tilde\va)$ is used instead of $C\cdot\bigl([\vf](\tilde\va,\tilde\vx) + [A]\cdot([\va]-\tilde\va)\bigr)$ because it is a more efficient interval evaluation.
\end{remark}

The preconditioned parametric Hansen-Sengupta operator is compared to the preconditioned parametric Krawczyk operator
\begin{eqnarray}
	\label{eq:pp-K} [K]_{\vf,[\va]}([\vx])& := & \tilde\vx+[K]\Bigl(C\cdot[X] , -[\vy] , [\vx]-\tilde\vx \Bigr)
	\\ {[K]}([A],[\vb],[\vx]) & := & [\vb]+(I-[A])\cdot[\vx].
\end{eqnarray}
proposed in \cite{Rump1990}, where the same interval enclosure $[\vy]:=C\cdot[\vf](\tilde\va,\tilde\vx) + \bigl(C\cdot[A]\bigr)\cdot([\va]-\tilde\va)$ of $\{C\cdot\vf(\va,\tilde\vx)|\va\in[\va]\}$ is used in \eqref{eq:pp-K} and in \eqref{eq:pp-HS} (this point is not detailed in \cite{Rump1990}). As in \eqref{eq:pp-HS}, $[X]=[\frac{d\vf}{d\vx}]([\va],[\vx])$, $[A]=[\frac{d\vf}{d\va}]([\va],\tilde\vx)$, $C=(\imid[X])^{-1}$, $\tilde\vx=\imid[\vx]$ and $\tilde\va=\imid[\va]$. Theorem \ref{thm:natural-parametrization}, the inclusion $[K]_{\vf,[\va]}([\vx],[\vz])$ implies the existence of an unique solution to each system $\vf(\va,\cdot)=\vnull$ for $\va\in[\va]$.

\begin{example}\label{ex:academic-1}
	Let
	\begin{equation}
		\vf(\va,\vx):=
		\begin{pmatrix}
			(x_1 + a_1)^2 + (x_2 - a_2)^2 - 1\\ (x_1 - a_1)^2 + (x_2 + a_2)^2 - a_3^2
		\end{pmatrix}
		,
	\end{equation}
	$[\va]:=([\frac{1}{2}-\epsilon,\frac{1}{2}+\epsilon],[-\epsilon,\epsilon],[1-\epsilon,1+\epsilon])$ and $[\vx]:=([-0.2,0.2],[-0.7,1.1])$. The uncertainties width is set to $\epsilon=0.025$. The set $\{ \ \vx\in[\vx] \ | \ \exists \va\in[\va],\vf(\va,\vx)=\vnull \ \}$ is approximately represented on the left hand side graphic of Figure \ref{fig:Example1} solving the $2\times 2$ system of equations for a finite set of parameters values inside $[\va]$.
	
	Both operators \eqref{eq:pp-HS} and \eqref{eq:pp-K} are used to improve the initial enclosure $[\vx]$ by computing the sequences
	\begin{eqnarray}
		[\vx_{k+1}] & = & [H]_{\vf,[\va]}([\vx_k],[\vx_k])
		\\ {[\vy_{k+1}]} & = & [K]_{\vf,[\va]}([\vy_k])
	\end{eqnarray}
	with $[\vx_0]=[\vy_0]=[\vx]$. The following table summarizes the results obtained with both operators:

	\begin{center}
	\begin{tabular}{|c|c|c|}
		\hline & Final enclosure & Existence step
		\\\hline Hansen-Sengupta & $([-0.074, 0.075],[0.831, 0.901])$ & 3
		\\\hline  Krawczyk & $([-0.074, 0.075],[0.831, 0.901])$ & 2
		\\\hline 
	\end{tabular}
	\end{center}
	
	The final enclosure is also shown on the left hand side graphic of Figure \ref{fig:Example1}. These results seem to show that the parametric Krawczyk operator is sharper than the parametric Hansen-Sengupta operator: they both compute the same final enclosure while the former proves the existence one step before. This is surprising since in their non parametric form the Hansen-Sengupta operator is proved to be sharper in general than the Krawczyk operator (cf. \cite{Neum90}). However, a closer study shows that the Hansen-Sengupta operator is actually sharper: The right hand side graphic of Figure \ref{fig:Example1} shows the ratio
	\begin{equation}
		\frac{||\iwid([\vy_k])||-\iwid([\vx_k])||}{\iwid([\vx_k])||}.
	\end{equation}
	As this graphic shows, the enclosure computed by the Hansen-Sengupta operator is alway sharper. The difference is sensible at the first iterations (reaching approximately $20\%$ at step 5), and converges to $0$ as $k$ goes to infinity (the dashed line corresponds to $12\exp(-0.46 k)$ for information about the convergence rate to $0$). This also explains why the existence proof is posponed of one step for the Hansen-Sengupta operator: the enclosure computed at step $k=1$ by this latter operator is too sharp to obtain $[\vx_{2}] \subseteq \mathrm{int}\,[\vx_1]$.

\end{example}

\begin{figure}[t!]
	\centering
	\includegraphics[width=.8\textwidth,draft=\draft]{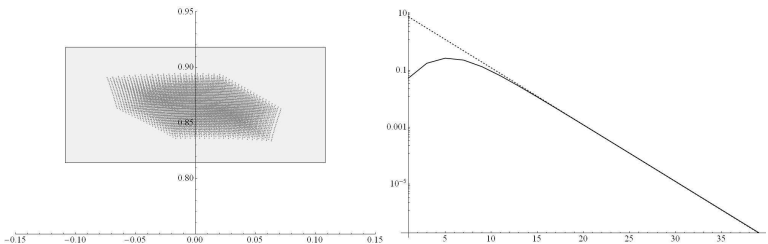}
	\caption{\label{fig:Example1}Left: Approximate solution set and its bounding box computed in Example \ref{ex:academic-1}. Right: Ratio of the enclosures widths for the parametric Hansen-Sengupta and Krawczyk operators plotted for each step.}
\end{figure}


\section{Rigorous Sensitivity Analysis}\label{s:application}

A direct application of Theorem \ref{thm:natural-parametrization} requires an initial domain. However, in practice this initial domain is often not available. Instead, an approximate solution $\vx^*$ for a nominal parameter value $\va^*\in[\va]$ is available. In the sequel, $\vx^*$ is supposed to satisfy exactly $\vf(\va^*,\vx^*)=\vnull$, but the usage of an approximate solution has no incidence in practice. From the sensitivity analysis point of view, we need to prove that each parameter $\va\in[\va]$ is mapped to an unique solution $\vx$ and to enclose the set of these solutions.

Provided that the parameters domains are small enough, the iteration
\begin{equation}
	[\vx_{k+1}]=[H]([\vx_k],\R^n) \ ; \ [\vx_0]=\vx^*
\end{equation}
will converge to $[\vx_\infty]$ which satisfies $[\vx_{\infty}]=[H]([\vx_\infty],\R^n)$. Although this limit can be proved to contain the solution set, the inclusion $[\vx_{k+1}]\subseteq \mathrm{int}[\vx_k]$ will never be satisfies because this iteration somehow translates and inflates the initial approximation $\vx^*$. It is more practical to additionally inflate each iterate of a fixed ration $\delta$ so as to obtain the inclusion $[\vx_{k+1}]\subseteq \mathrm{int}[\vx_k]$ after a finite number of steps, and hence prove the inclusion of the solution set. This leads to Algorithm \ref{alg:inflate}.

\incmargin{1em}
\linesnumbered
\begin{algorithm}
	\KwIn{$\vf:\R^p\times\R^n\rightarrow\R^n$, $[\va]\in\IR^p$, $\vx^*\in\R^n$}
	\KwOut{$[\vx]\in\IR^n$}
	$k_\mathrm{max}\leftarrow 10$;~\tcc{Maximum number of iterations}
	$\delta\leftarrow 1.01$;~\tcc{Inter-step inflation ratio}
	$\mathrm{success}\leftarrow\mathrm{\bf false}$\;
	$[\vx]\leftarrow \vx^*$\;
	\Repeat{$(~k>k_\mathrm{max}~)$}
	{
		$[\vx']\leftarrow \imid[\vx]+\delta \ ([\vx]-\imid[\vx])$\label{line:inflation}\;
		$[\vx]\leftarrow [H]_{\vf,[\va]}([\vx'],\R^n)$\;
		\lIf{$(~[\vx]\subseteq\mathrm{int}[\vx']~)$}{$\mathrm{success}\leftarrow\mathrm{\bf true}$\;}
	}
	\lIf{$(~\neg\,\mathrm{success}~)$}{$[\vx]\leftarrow\R^n$\;}
	\Return{$(~[\vx]~)$}\;
	\caption{\label{alg:inflate}}
\end{algorithm}

\begin{remark}
	Fixing a maximum number of steps $k_\mathrm{max}$ forces the termination of the algorithm. Smarter stopping criteria can easily be implemented. Also, it can be noted that once the existence proof has succeeded, the iteration becomes contracting and encloses the solution set. Therefore, Line \ref{line:inflation} can be replaced by $[\vx']\leftarrow [\vx]$ once $\mathrm{success}$ is $\mathrm{\bf true}$.
\end{remark}

As shown by the next examples, Algorithm \ref{alg:inflate} can be used as a rigorous sensibility analysis of the solution to a parametric system of equation: Being simply given an approximate solution for a nominal parameter value, Algorithm \ref{alg:inflate} allows rigorously bounding the variations of the solutions w.r.t. the variations of parameters.

\begin{example}\label{ex:academic-2}
	Let $\vf$ and $[\va]$ be defined as in Example \ref{ex:academic-1}, and consider the approximate solution $\vx^*:=(0.01,0.85)$, which is represented by a cross in Figure \ref{fig:InitialApproximateSolution}. The first steps of Algorithm \ref{alg:inflate} are represented by dashed boxes on Figure \ref{fig:InitialApproximateSolution}. The existence is proved at after four iterations, and hence the solution set is enclosed.
\end{example}

\begin{figure}[t!]
	\centering
	\includegraphics[width=0.5\textwidth,draft=\draft]{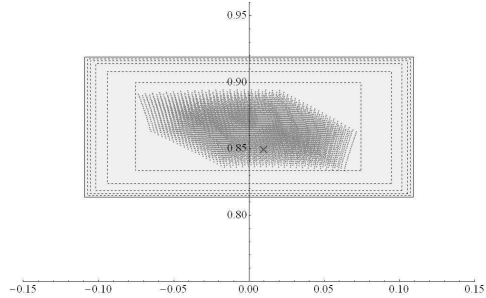}
	\caption{\label{fig:InitialApproximateSolution}Approximate solution set and its bounding box computed in Example \ref{ex:academic-2}. The initial approximate solution is represented by a cross and the boxes computed by the first steps in dashed lines.}
\end{figure}

\paragraph{\bf Related Work} In addition to \cite{Rump1990}, we have found two references \cite{Neumaier1989,Wallner2005} which use interval analysis for rigorous sensitivity analysis. The advantage of the method presented here is that it does not require an initial enclosure of the variations.




\end{document}